\newcommand{\Z}{\mathds{Z}}
\newcommand{\R}{\mathds{R}}
\newcommand{\Ric}{\mathrm{Ric}}
\newcommand{\C}{\mathds{C}}
\newcommand{\K}{K\"{a}hler}
\newcommand{\scal}{\operatorname{scal}}
\def\b{\beta}
\def\b1{{\rm id}}
\newtheorem{theor}{Theorem}[section]
\newtheorem{prop}[theor]{Proposition}
\newtheorem{example}{Example}
\newtheorem{rmk}{Remark}
\begin{document}

\title[On canonical radial metrics]{On canonical radial  K\"{a}hler metrics}

\author{Andrea Loi}
\address{(Andrea Loi) Dipartimento di Matematica \\
         Universit\`a di Cagliari, Via Ospedale 72, 09124  (Italy)}
         \email{loi@unica.it}

\author{Filippo Salis}
\address{(Filippo Salis) Dipartimento di Scienze Matematiche ``G. L. Lagrange'', Politecnico di Torino,
Corso Duca degli Abruzzi 24, 10129 Torino (Italy)}
\email{filippo.salis@polito.it}

\author{Fabio Zuddas}
\address{(Fabio Zuddas) Dipartimento di Matematica \\
         Universit\`a di Cagliari, Via Ospedale 72, 09124  (Italy)}
         \email{ fabio.zuddas@unica.it}

\thanks{
The first and the third authors were supported  by STAGE - Funded by Fondazione di Sardegna and by KASBA- Funded by Regione Autonoma della Sardegna. The second author was supported by PRIN 2017 ``Real and Complex Manifolds: topology, geometry and holomorphic dynamics” and MIUR grant ``Dipartimenti di Eccellenza 2018-2022”. All the three authors were supported by INdAM GNSAGA - Gruppo Nazionale per le Strutture Algebriche, Geometriche e le loro Applicazioni.
}

\subjclass[2000]{53C55, 32Q15, 32T15.} 
\keywords{\K\ \ metric, extremal metric; constant scalar curvature metric;  generalized constant scalar curvature metric; \K-Einstein metrics}

\begin{abstract}
We prove that   a radial  \K\ metric $g$  is \K-Einstein if and only if one of the following conditions is satisfied:
1. $g$ is  extremal and it is associated to a \K-Ricci soliton; 2. two different  generalized scalar curvatures of $g$ are constant;
3. $g$ is extremal  (not cscK) and one of its generalized scalar curvature is constant.
\end{abstract}
 
\maketitle

\tableofcontents  

\section{Introduction}
Given a complex manifold $M$ (compact or not)  it is an interesting and well-studied problem to
see when $M$ can be endowed with  some canonical metric.
Undoubtedly the most studied and important are the  \K-Einstein (KE) metrics.

Other prominent examples that generalize KE metrics and have attracted the attention of many mathematicians
are the following three types of \K\ metrics.

\vskip 0.1cm

\noindent
{\bf 1}. {\em Extremal
metrics}. Introduced  by Calabi \cite{calextrem},
are  those  metrics 
such that the (1,0)-part of the
Hamiltonian vector field associated to the scalar curvature  is
holomorphic.  The reader is referred to \cite{LSZext} and references therein for more details.
We denote by ${\mathcal Ext}(M)$ the set of 
extremal  metrics on $M$.

\vskip 0.1cm

\noindent
{\bf 2}. {\em The metrics associated to a 
\K-Ricci soliton (KRS)}. A KRS on a complex manifold $M$ is a pair $(g, X)$ consisting of a Kähler metric $g$ and a
holomorphic vector field $X$, called the {\em solitonic vector field}, such that 
\begin{equation}\label{eqkrsg}
\rho=\lambda \omega+L_{X}\omega
\end{equation}
for some $\lambda \in \mathbb{R}$, called the {\em solitonic constant}.
Here $\omega$ and $\rho$ are respectively the \K\ form and  the Ricci form of the metric $g$ 
and $L_X\omega$ denotes the Lie derivative of $\omega$ with respect to $X$.
KRS are  special solutions of the \K-Ricci flow 
and they generalize \K--Einstein (KE) metrics\footnote{For more information on KRS see  references in \cite{LM}.} . Indeed any
KE metric $g$ on a complex manifold $M$ gives rise to a
trivial KRS by choosing $X = 0$ or $X$ Killing with
respect to $g$. Obviously if the automorphism group of $M$ is
discrete then a \K--Ricci soliton $(g, X)$ is nothing but a
KE metric $g$. 
We denote by ${\mathcal K\mathcal R \mathcal S}(M)$ the set of  \K\ metrics $g$ on  $M$ such that  $(g, X)$ is a KRS, for some solitonic vector field  $X$.

\vskip 0.1cm

\noindent
{\bf 3}. {\em The $k$-generalized constant scalar curvature metrics,  $1\leq k\leq n$
(where $n$ is the complex dimension of $n$).}
Let $g$ be a \K\ metric. By definition, the  {\em $k$-generalized scalar curvature}, $1\leq k\leq n$,  $\rho_k(g)$ of $g$ 
are defined as (see \cite{ogiue}):
\begin{equation}\label{genscal}
\frac{\det \left(g_{i\bar j}+s\ \Ric_{i\bar j}\right)}{\det(g_{i\bar j})}=1+\sum_{k=1}^n\rho_k(g)s^k.\end{equation}
Notice that $\rho_1(g)=\scal_g$, where $\scal_g$ is the scalar curvature of the metric $g$.
Denote by ${\mathcal C}_k(M)$ the set of \K\ metrics $g$ on $M$ such that  $\rho_k(g)$  is a constant.

\vskip 0.3cm

For any complex manifold $M$ one clearly has the following inclusions:
\begin{equation}\label{inclusions}
{\mathcal K\mathcal R \mathcal S}(M)\supseteq{\mathcal K\mathcal E}(M)\subseteq{\mathcal C}_k(M),\  {\mathcal K\mathcal E}(M)\subseteq {\mathcal C}_1(M)\subseteq{\mathcal Ext}(M)
\end{equation}
where ${\mathcal K\mathcal E}(M)$ is the set of KE metrics on $M$.

It is then interesting to study  the following:

\vskip 0.3cm

\noindent
{\bf Problem.}
{\em Find conditions which ensure that a canonical \K\ metric of the types above is  KE.}

\vskip 0.3cm

In  this regard we recall some  results when $M$ is {\em compact},  summarized in the following 
theorem.

\vskip 0.3cm

\noindent
{\bf Theorem A.}
{\em Let $M$ be  a  compact complex  manifold $M$.  Then 
the following facts hold true.
\begin{itemize}
\item [(a)]
${\mathcal C}_k(M)\cap{\mathcal K\mathcal R \mathcal S}(M)\subseteq{\mathcal K\mathcal E}(M)$, for all $k\geq 1$.
\item [(b)] 
if $g\in {\mathcal Ext}(M)\cap {\mathcal K\mathcal R \mathcal S}(M)$ and assume that one of the two following 
conditions holds true:
\begin{itemize}
\item [(b1)]
$(M, g)$ is  toric;
\item [(b2)]
the   holomorphic sectional curvature of $g$ does not change sign.
\end{itemize}
Then $g$ is KE.
\end{itemize}}
\begin{proof}
Let $g$ be the \K\ metric associated to a KRS and $\omega$ its \K\ form.
Notice that  the solitonic vector field of a  KRS on a compact complex manifold is gradient and hence $\omega$
is  cohomologically Einstein. 
 Hence  (a) follows  by the first Corollary in \cite{ChOg} when $\rho_1(g)$ is constant 
 and when $\rho_k(g)$ is constant and different from zero, for $k\geq 1$.
 If $\rho_k(g)=0$ for $k\geq 1$ then \cite[Theorem 1]{ogiue} yields that $c_1(M)=0$, i.e. the KRS
 is steady and hence $g$ is  forced to be KE by \cite{CAOdef}.
 
The proofs of (b1) and (b2) can be found  in  \cite{CP17} and \cite{CP16} respectively.
\end{proof}

\begin{rmk}\rm
We do not know if  the assumptions (b1)  and (b2) can be dropped.
Notice that for the proof of (b2) one needs to use only that the KRS is gradient (always true in the compact case)
and the holomorphic sectional curvature does not change sign.
\end{rmk}

\begin{rmk}\rm
Notice that   the inclusion $ {\mathcal K\mathcal E}(M)\subseteq{\mathcal C}_k(M)\cap{\mathcal C}_1(M)$
(and hence the inclusion ${\mathcal K\mathcal E}(M)\subseteq{\mathcal C}_k(M)\cap {\mathcal Ext}(M)$) for  $k\geq 1$ is strict for a compact complex manifold $M$ even if one assumes (b2) in Theorem A. Indeed the metric $g$ given by the  product of the flat metric  and the Fubini-Study metric  on $T^{n-k+1}\times \C P^{k-1}$ (where $T^{n-k+1}$ is the complex torus and $\C P^{k-1}$ the complex projective space) has constant scalar curvature, $\rho_k(g)=0$, it is not KE  and its holomorphic sectional curvature is non-negative (cfr. the final Remark in  \cite{ChOg}).
In light of  (b1) in Theorem A  it could be interesting to see if the equality   ${\mathcal C}_k(M)\cap{\mathcal Ext}(M)= {\mathcal K\mathcal E}(M)$ holds true  in the compact toric case.
\end{rmk}

When the manifold involved is noncompact 
the previous  problem has been studied by the first and third author of the present paper
for Hartogs domains.
More precisely in \cite{osaka} it is shown  that if  the \K\ metric $g$ naturally associated to an Hartogs domain $D\subset \C^n$
belongs to one of the three types described above then $g$ is forced to be KE (and hence $(D, g)$
is  holomorphically isometric to an open subset of the complex hyperbolic  $n$-space).

In this paper we restrict to radial metrics, namely those \K\ metrics 
$g$ on (noncompact) complex manifolds   which admit a global K\"ahler potential which depends only on the sum $|z|^2 = |z_1|^2 + \cdots + |z_n|^2$ of the local coordinates' moduli. 

If  $M$ is  a  complex manifold we denote by
$${\mathcal Rad}(M)=\{\mbox{radial \K\ metrics on}\  M\}$$

 The main result of the paper is the following theorem
 which shows  in particular that  in the noncompact  radial case
 the same conclusion of Theorem A can be achieved without any assumption 
 on the curvature of the metric.

\begin{theor}\label{mainteor}
Let $M$ be a complex manifold.
Then the following facts hold true.
\begin{itemize}
\item [(i)]
${\mathcal Ext}(M)\cap {\mathcal K\mathcal R \mathcal S}(M)\cap {\mathcal Rad}(M)={\mathcal K\mathcal E}(M)\cap {\mathcal Rad}(M)$;
\item [(ii)]
${\mathcal C}_k(M)\cap {\mathcal C}_h(M)\cap {\mathcal Rad}(M)={\mathcal K\mathcal E}(M)\cap {\mathcal Rad}(M), \forall h, k\geq 1, h\neq k$;
\item [(iii)]
${\mathcal C}_k(M)\cap {\mathcal Ext}(M)\cap {\mathcal Rad}(M)={\mathcal K\mathcal E}(M)\cap {\mathcal Rad}(M), \forall k> 1$;
\item [(iv)]
${\mathcal C}_k(M)\cap{\mathcal K\mathcal R \mathcal S}(M)\cap {\mathcal Rad}(M)={\mathcal K\mathcal E}(M)\cap {\mathcal Rad}(M), \forall k\geq 1$.
\end{itemize}
\end{theor}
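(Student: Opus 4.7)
\emph{Setup.} The plan is to reduce each of the three hypotheses (extremality, KRS, constancy of $\rho_k$) to an ODE in the radial variable $r=|z|^2$ and then show that each combination appearing in (i)--(iv) is overdetermined enough to force the Ricci eigenvalues to coincide and be constant. Write the global radial potential as $\Phi(r)$ and set $\phi=\Phi'$, $\psi=(r\phi)'=\Phi'+r\Phi''$; these are the two eigenvalues of $g$, with multiplicities $n-1$ (tangential) and $1$ (radial). With $F=\log\det g=(n-1)\log\phi+\log\psi$, the Ricci tensor is diagonal in the same frame, with eigenvalues $\mu_T=-F'(r)/\phi(r)$ (mult.\ $n-1$) and $\mu_R=-(rF'(r))'/\psi(r)$, so that $g$ is KE iff $\mu_R=\mu_T=\mathrm{const}$. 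The backbone of everything is the following \emph{integrability lemma}: \emph{if $\mu_T$ and $\mu_R$ are each constant, then automatically $\mu_T=\mu_R$, so $g$ is KE.} Indeed, $\mu_T=\mathrm{const}$ gives $F'=-\mu_T\phi$, while $\mu_R=\mathrm{const}$ integrates (using $\int\psi\,dr=r\phi+\text{const}$) to $F'=-\mu_R\phi+C/r$; equating gives $(\mu_R-\mu_T)\phi=C/r$, and if $\mu_R\neq\mu_T$ then $\phi\propto 1/r$ hence $\psi=(r\phi)'\equiv 0$, contradicting positive-definiteness of $g$.

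\emph{Dictionary.} For \emph{extremality}, one computes $\nabla^{1,0}\scal=-2i(\scal'/\psi)\sum z_i\partial_i$, so that holomorphicity of the gradient is equivalent to $\scal'/\psi$ being a constant $c_1$, giving $\scal(r)=c_1r\phi(r)+\scal(0)$. For \emph{KRS}, since $\rho$ and $\omega$ are $U(n)$-invariant, the solitonic field may be averaged and taken as $X=c\sum z_i\partial_i$ with $c\in\R$; matching coefficients in $\rho=\lambda\omega+L_X\omega$ yields
\begin{equation*}
-F'(r)=\lambda\phi+c\psi,\qquad\text{whence}\qquad \mu_T=\lambda+cw,\quad \mu_R=\mu_T+cr(\log w)',
\end{equation*}
where $w=\psi/\phi$. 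For \emph{$\rho_k$ constant}, (\ref{genscal}) gives $\rho_k(g)=\binom{n-1}{k}\mu_T^k+\binom{n-1}{k-1}\mu_R\mu_T^{k-1}$. A useful identity from $F=(n-1)\log\phi+\log\psi$ is
\begin{equation*}
rF'(r)=n(w-1)+r(\log w)'.
\end{equation*}

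\emph{Proof of each part.} For (i), combine extremality with KRS: the two expressions $\scal=n\lambda+c(nw+r(\log w)')$ (from KRS) and $\scal=c_1r\phi+\scal(0)$ (from extremality) reduce, via the key identity above, to $c\,rF'(r)=c_1r\phi+D$ with $D=\scal(0)-n(\lambda+c)$. Smoothness of the metric at $r=0$ (where $w(0)=1$ and $\mu_T(0)=\mu_R(0)=\lambda+c$) forces $D=0$; hence $cF'=c_1\phi$, which substituted back into the KRS ODE gives $\psi/\phi=\mathrm{const}$, so $w'\equiv 0$ and $\mu_R=\mu_T$. For (ii), eliminating $\mu_R$ from the algebraic system $\rho_k=a$, $\rho_h=b$ yields a polynomial in $\mu_T$ whose leading coefficient $\binom{n-1}{h}\binom{n-1}{k-1}-\binom{n-1}{h-1}\binom{n-1}{k}$ equals $\tfrac{n(k-h)}{hk}\binom{n-1}{h-1}\binom{n-1}{k-1}\neq 0$ (since $h\neq k$); so $\mu_T$ takes only finitely many values, hence is constant by continuity, then $\mu_R$ is constant too, and the integrability lemma concludes (the sub-case $\mu_T\equiv 0$ gives $F'\equiv 0$, i.e.\ Ricci-flat, which is KE). For (iii) and (iv) one proceeds by the same template as (i): substitute the extremality (resp.\ KRS) closed-form expressions into the remaining $\rho_k$-constant identity, organize it as a polynomial identity in independent monomials, and use the smoothness at the origin to extract $c_1=0$ (resp.\ $c=0$); then $\mu_R\equiv\mu_T\equiv\mathrm{const}$.

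\emph{Main obstacle.} The principal technical difficulty lies in the polynomial bookkeeping in (iii) and (iv): one must organize the substituted equations as a polynomial identity among independent monomials in $w$, $r(\log w)'$, and $\phi$, and verify that the vanishing of coefficients truly forces the non-Einstein parameter ($c$ or $c_1$) to vanish. The hypothesis $k>1$ in (iii) is essential here: for $k=1$ the equation $\rho_1=\mathrm{const}$ is subsumed under extremality (it is the special case $c_1=0$) and no genuinely new information is available, which is consistent with the existence of non-Einstein cscK radial metrics. A secondary subtlety is a careful treatment of smoothness at $r=0$, used to pin down the integration constant $D$: without the normalization $w(0)=1$ and finiteness of $\phi(0)$, the over-determined ODEs admit extra algebraic solutions that must be eliminated by a separate boundary analysis.
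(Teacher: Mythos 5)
Your reduction to the two Ricci eigenvalues $\mu_T,\mu_R$, the integrability lemma ($\mu_T$ and $\mu_R$ separately constant $\Rightarrow$ equal, hence KE), and the algebraic elimination in (ii) are all correct; in fact (ii) is complete and takes a genuinely more elementary route than the paper, which instead integrates each condition $\rho_k=\mathrm{const}$ to the closed form $\sigma(y)=n-y\bigl(A_k+B_k/y^{n}\bigr)^{1/k}$ and compares the two resulting expressions. However, the proposal as a whole has two genuine gaps. First, the repeated appeal to ``smoothness at $r=0$'' to kill the integration constant $D$ (in (i), and promised again for (iii) and (iv)) is not available: a radial metric in the sense of the theorem lives on $\{r_{\inf}<r<r_{\sup}\}$ with possibly $r_{\inf}>0$, and the relevant non-trivial examples (rotationally symmetric solitons on $\C^n\setminus\{0\}$, the paper's Example 2, the extremal metrics with $A\neq 0$ in \eqref{ypsi}) are precisely those \emph{not} defined at the origin. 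In (i) the case $c\neq 0$, $D\neq 0$ is therefore left open; e.g.\ for $n=2$ your trace computation alone only yields $ry'=\alpha y+\beta$ with $\beta\neq0$, which is compatible with the extremal family $ry'=y-B$, $B\neq 0$, and is only excluded by comparing with the \emph{full} soliton ODE \eqref{finale9} (which produces the term $B/y$ that must vanish), not by a boundary condition.

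Second, for (iii) and (iv) the proposal stops at a plan: the ``polynomial bookkeeping'' you defer is exactly where the content of the theorem lies, and the framework you set up does not obviously carry it out. For (iv) in particular, the decisive point in the paper is that the general non-trivial radial KRS has $\sigma(y)$ containing the transcendental term $\mu\nu e^{\mu y}/y^{n-1}$ (equation \eqref{sigma}), which can never coincide with the algebraic function $n-y\bigl(A_k+B_k/y^{n}\bigr)^{1/k}$ unless $\nu=0$ and $\lambda=\mu$; this is not a ``polynomial identity among independent monomials in $w$, $r(\log w)'$ and $\phi$,'' and no amount of matching monomial coefficients will produce it. For (iii) one similarly needs the observation that extremality forces $\sigma$ to be a rational function of $y$, so that $\bigl(A_k+B_k/y^{n}\bigr)^{1/k}$ must be rational, which for $k>1$ forces $B_k=0$ except in the residual case $A_k=0$, $n/k\in\Z$, which must then be excluded by a second comparison. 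To close the argument along your lines you would need to first integrate the extremal and KRS conditions to their explicit general solutions on an arbitrary interval (Propositions \ref{lemmasimple} and \ref{mainprop}) and compare those with the integrated $\rho_k$ condition, rather than relying on origin regularity and coefficient matching of the un-integrated ODEs.
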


In the next section we collect some results on radial metrics and we prove  Theorem \ref{mainteor}.
In the final section we provide some explicit examples and compare Theorem \ref{mainteor} with Theorem A.

\section{Radial canonical \K\ metrics}
Let $g$ be a radial  \K\ metric on a connected  complex manifold $M$, equipped with complex coordinates $z_1, \dots ,z_n$ and let $\omega$ and $\rho$ be respectively 
the \K\ form and the Ricci form associated to $g$.
Then  there exists a   smooth  function 
$$f: (r_{\inf}, r_{\sup})\rightarrow \R, \ 0\leq r_{\inf}<r_{\sup}\leq\infty,$$
where $(r_{\inf}, r_{\sup})$ is the maximal domain where $f(r)$ is defined 
such that
 \begin{equation}\label{omegar}
 \omega =\frac{i}{2} \partial \bar \partial f(r), \ r=|z|^2=|z_1|^2+\cdots +|z_n|^2,
 \end{equation}
 i.e. $f(r)$ is a radial potential for the metric $g$.

One can easily see that 
the matrix of the metric $g$ and of the Ricci form $\rho$ read  as
\begin{equation}\label{metric}
\omega_{i\bar j}=f'(r)\delta_{ij}+f''(r) \bar z_i z_j.
\end{equation}
\begin{equation}\label{formaricci}
\rho_{i\bar j}= L'(r) \delta_{i j} + L''(r) \bar z_ i z_j,
\end{equation}
where $L(r) = - \log(\det g)(r)$.

Set 
\begin{equation}\label{y(r)}
y(r):=rf'(r).
\end{equation}
and
\begin{equation}\label{psi(y)}
\psi(r) := ry'(r).
\end{equation}
Then 
\begin{equation}\label{psi(y)2}
\psi(r) = \frac{dy}{dt},\  r=e^t.
\end{equation}

The fact that $g$ is a metric is equivalent to 
$y(r)>0$ and $\psi (r)>0$, $\forall r\in (r_{\inf}, r_{\sup})$.
Then 
\begin{equation}\label{limtr}
\lim_{r\rightarrow r^+_{\inf}}y(r)= y_{\inf}
\end{equation}
is a non negative real number.
Similarly set
\begin{equation}\label{limtrbis}
\lim_{r\rightarrow r^-_{\sup}}y(r)= y_{\sup}\in (0, +\infty].
\end{equation}

Therefore we can invert the map
$$(r_{\inf}, r_{\sup})\rightarrow  (y_{\inf}, y_{\sup}),\  r\mapsto y(r)=rf'(r)$$ 
on $(r_{\inf}, r_{\sup})$ and think $r$ as a function of $y$, i.e. $r=r(y)$.

Hence we can set
\begin{equation}\label{psi}
\psi (y):=\psi (r(y)).
\end{equation}

Finally,  from (\ref{metric}), we easily get 
\begin{equation}\label{detmetric}
(\det g_{i\bar j})(r)=\frac{(y(r))^{n-1}\psi (y(r))}{r^{n}}.
\end{equation}

The following three propositions  (Proposition \ref{lemmasimple}, Proposition \ref{mainprop} and Proposition \ref{mainprop2})
are the key tools for the proof of Theorem \ref{mainteor} and 
provide us with the  explicit expressions of  radial extremal metrics,  radial KRS and radial generalized cscK metrics respectively, in terms of the  functions 
$y$ and $\psi (y)$ defined by \eqref{y(r)} and \eqref{psi(y)}.

\begin{prop}\label{lemmasimple}
A radial \K\ metric  $g$ is extremal if and only if
\begin{equation}\label{ypsi}
\psi(y) = y - \frac{A}{y^{n-1}} - \frac{B}{y^{n-2}} - C y^2 - D y^3. 
\end{equation}
for some $A,B,C,D \in \R$.
Moreover, 
\begin{itemize}
\item [(a)]
if $n=1$, $g$ is  KE (i.e. a complex space form) iff  $D=0$. Moreover, its (constant) scalar curvature is given by $2C$;
\item  [(b)]
if $n\geq 2$, $g$ is  KE
iff $B=D=0$ with Einstein constant $2C(n+1)$. Moreover, the metric is flat iff $A=B=C=D=0$.
\end{itemize}
\end{prop}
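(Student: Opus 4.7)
The plan is to compute the scalar curvature $\scal$ of a radial \K\ metric as an explicit function of $\psi$ and $y$, translate the extremal condition into an affine constraint on $\scal$ regarded as a function of $y$, and then integrate the resulting second-order ODE to recover \eqref{ypsi}. The KE characterisations in (a) and (b) will follow from an analogous first-order ODE, and flatness will require one additional curvature check.

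First I would use \eqref{metric}, \eqref{formaricci} and \eqref{detmetric} to invert $g_{i\bar j}$ and contract with $\rho_{i\bar j}$. After passing to the coordinate $t = \log r$, in which $\dot y = \psi$ and $L = -(n-1)\log y + n t - \log\psi$ (so $\dot L = n - (n-1)\psi/y - \psi'(y)$), a direct computation telescopes into the clean identity
$$\scal = -\frac{1}{y^{n-1}}\,\frac{d^{2}}{dy^{2}}\Bigl[y^{n-1}\bigl(\psi(y) - y\bigr)\Bigr].$$
Next, extremality is equivalent to $\scal$ being an affine function of $y$. Indeed $\partial_{\bar j}\scal = \scal'(r)\,z_{j}$, and a direct computation starting from $g^{i\bar j} = \frac{1}{f'}\delta_{ij} - \frac{f''}{f'\,y'}\,z_{i}\bar z_{j}$ gives $g^{i\bar j}z_{j} = z_{i}/y'(r)$. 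Hence the $(1,0)$-gradient of $\scal$ equals $\frac{d\scal}{dy}\sum_{i} z_{i}\,\partial_{z_{i}}$, which is holomorphic on $M$ if and only if $d\scal/dy$ is constant, i.e.\ $\scal = cy + d$ for some $c,d\in\R$. Substituting into the identity above and integrating twice in $y$ yields \eqref{ypsi} with $C = d/[n(n+1)]$, $D = c/[(n+1)(n+2)]$, and $A, B \in \R$ the two constants of integration.

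For (b) I would separately encode $\rho = \lambda\omega$ by matching coefficients in \eqref{metric} and \eqref{formaricci}, which reduces to $L'(r) = (\lambda/2)f'(r)$, or equivalently $\dot L = (\lambda/2)\,y$. Combined with $\dot L = n - (n-1)\psi/y - \psi'(y)$ this becomes the linear first-order ODE $\psi'(y) + (n-1)\psi/y = n - (\lambda/2)y$, with general solution
$$\psi(y) = y - \frac{\lambda}{2(n+1)}\,y^{2} + \frac{K}{y^{n-1}}, \qquad K\in\R.$$
Matching against \eqref{ypsi} forces $B = D = 0$ and $C = \lambda/[2(n+1)]$, so that the Einstein constant equals $2C(n+1)$, proving (b). When $n=1$, the term $B/y^{n-2}$ is linear in $y$, the form \eqref{ypsi} reads $\psi(y) = (1-B)y - A - Cy^{2} - Dy^{3}$, and the scalar curvature reduces to $\scal = 6Dy + 2C$; being KE is then simply being cscK, i.e.\ $D = 0$ with constant value $2C$, proving (a).

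Finally, for the flat case, $\scal = 0$ gives $c = d = 0$, hence $C = D = 0$, and $\Ric = 0$ combined with the ODE above (with $\lambda = 0$) forces $B = 0$ and $\psi(y) = y - A/y^{n-1}$. To conclude that $A = 0$ one can argue that a flat \K\ metric is locally of the form $g = \sum dw_{i}\otimes d\bar w_{i}$, so that $|w|^{2} - f(r)$ is pluriharmonic; combining with the $U(1)^{n}$ rotational invariance forces $f(r) = cr + \text{const}$, which is exactly $\psi = y$ (that is, $A = 0$). Alternatively one computes one Riemann component $R_{i\bar j k\bar l}$ at a point $z = (z_{1},0,\ldots,0)$ and checks that, for $\psi = y - A/y^{n-1}$ and $n\geq 2$, it is a nonzero multiple of $A$. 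I expect the main obstacle to be Step 1, the scalar-curvature computation: organising the expressions for $g^{i\bar j}$, $L'$ and $L''$ so that they collapse into the clean $(y,\psi)$ identity above is the most error-prone part, after which the remaining arguments are routine ODE integrations and coefficient matching.
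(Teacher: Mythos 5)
Your proof is correct, but it cannot be compared line-by-line with the paper's, because the paper gives no proof at all: it simply cites \cite[Lemma 2.1]{LSZext}. Your argument is the natural self-contained one, and every key identity checks out against the machinery the paper does develop. In particular, your scalar-curvature formula $\scal=-\tfrac{1}{y^{n-1}}\tfrac{d^{2}}{dy^{2}}[y^{n-1}(\psi-y)]$ equals $(n-1)\tfrac{n-\sigma}{y}-\dot\sigma$ with $\sigma$ as in \eqref{sigmafond}, which is exactly the coefficient of $s^{1}$ in the paper's expansion \eqref{genscalrad}; the identity $g^{\bar j i}\partial_{\bar j}\scal\,\partial_i=\tfrac{d\scal}{dy}\sum z_i\partial_{z_i}$ is right (Sherman--Morrison gives $g^{\bar ji}z_j=z_i/(f'+rf'')=z_i/y'$), so extremality is indeed affineness of $\scal$ in $y$; the double integration produces \eqref{ypsi} with the stated $C,D$; and the KE ODE $\dot\psi+(n-1)\psi/y=n-\tfrac{\lambda}{2}y$ has general solution $y-\tfrac{\lambda}{2(n+1)}y^{2}+K y^{1-n}$, matching the paper's normalization $\lambda=2C(n+1)$ and its later assertion $\sigma=n-\tfrac{\lambda}{2}y$. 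Two small remarks: (1) the central curvature identity is asserted to ``telescope'' rather than derived, but since it coincides with the paper's own computation in Proposition \ref{mainprop2} this is a presentational rather than a mathematical gap; (2) of your two arguments that flatness forces $A=0$, the first (pluriharmonicity of $|w|^{2}-f(r)$ plus rotational invariance) is shaky as stated, because the flat coordinates $w(z)$ need not respect the radial symmetry; the second, computing e.g. $R_{1\bar 1 i\bar i}=\tfrac{\dot\psi y-\psi}{y r^{2}}=\tfrac{nA}{y^{n}r^{2}}$ for $\psi=y-A/y^{n-1}$, is clean and should be the one you keep.
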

\begin{proof}
See  \cite[Lemma 2.1] {LSZext} for a proof.
\end{proof}

\begin{rmk}\rm\label{rmkeinstein} 
From (\ref{ypsi}) we easily deduce that if a \K-Einstein metric is defined at the origin $r=0$ then it is a complex space form. Indeed, the metric is Einstein if and only if 
$$\psi(y) = y - \frac{A}{y^{n-1}} - C y^2,$$ 
which immediately implies that $A=0$ if the metric is defined at the origin since in that case $y(r) = rf'(r) =0$  and $\psi(r) = r(rf'(r))'=0$ for $r=0$.
\end{rmk}

 \begin{prop}\label{mainprop}
Let $g$ be a radial KRS with solitonic constant $\lambda$.
Then the following facts hold true.

If $n=1$ then there exist  $\mu, k\in\R $ such that

\begin{equation}\label{finalen=1}
\dot\psi(y)= \mu \psi(y) + k+1 -\lambda y
\end{equation}

and if $\mu=0$ then the  soliton is trivial (i.e. a complex space form).
If $\mu\neq 0$ then
\begin{equation}\label{psiespln=12}
\psi(y) = \nu e^{\mu y} + \frac{\lambda}{\mu} y + \left( \frac{\lambda}{\mu^2} - \frac{k + 1}{\mu} \right)
\end{equation}
and  the soliton is trivial iff it is flat iff $\nu=0$.

If $n\geq 2$ then there exists $\mu\in\R$ such that 

\begin{equation}\label{finale9}
\dot\psi(y) = \left( \mu - \frac{n-1}{y} \right) \psi (y) + n - \lambda y 
\end{equation}
and if $\mu=0$ the soliton is trivial  (i.e. KE).
If $\mu\neq 0$ then
\begin{equation}\label{psiesplicita}
 \psi(y) = \frac{\nu e^{\mu y}}{y^{n-1}} + \frac{\lambda}{\mu} y +  \frac{\lambda - \mu}{\mu^{1+n}} \sum_{j=0}^{n-1} \frac{n!}{j!} \mu^j y^{j+1-n}.
\end{equation}
and  the soliton is trivial iff it is flat iff $\nu=0$ and  $\mu =\lambda$.
\end{prop}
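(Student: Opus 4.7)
The plan is to translate the soliton equation $\rho = \lambda\omega + L_X\omega$ into a linear first-order ODE for $\psi$ as a function of $y$, and then integrate it explicitly. The first step is to identify the solitonic vector field $X$ on a radial metric. Using the fact that for any holomorphic vector field $X$ one has $L_X(i\partial\bar\partial f) = i\partial\bar\partial(X(f))$, together with the requirement that $L_X\omega$ be $U(n)$-invariant (since both $\rho$ and $\omega$ are), a straightforward check shows that, modulo a Killing field (which does not affect $L_X\omega$), one may take $X$ to be a real multiple of the Euler field $E = \sum_i z_i\partial_{z_i}$. Writing $X = \mu E$ for some $\mu\in\R$ (up to a sign convention), one has $X(f) = \mu\, r f'(r) = \mu y(r)$.

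Integrating the soliton equation, both sides being closed $(1,1)$-forms of type $\tfrac{i}{2}\partial\bar\partial(\cdot)$, gives $L(r) = \lambda f(r) + \mu y(r) + p(r)$, where $p(r)$ is a radial pluriharmonic function on the relevant domain. The dimension plays a decisive role here: for $n\geq 2$ the only radial pluriharmonic functions are constants, whereas for $n=1$ the function $\log r$ is pluriharmonic since $\partial\bar\partial\log(z\bar z) = 0$, so the space of radial pluriharmonic functions is two-dimensional. This is precisely the source of the extra parameter $k$ appearing in (\ref{finalen=1}) but absent from (\ref{finale9}). Differentiating $L$ and computing $rL'(r)$ on both sides, using
\[
rL'(r) = n - (n-1)\frac{\psi}{y} - \dot\psi,
\]
which follows by direct computation from $L(r) = n\log r - (n-1)\log y - \log\psi$ via (\ref{detmetric}), one arrives at the desired ODEs (\ref{finalen=1}) and (\ref{finale9}).

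Solving these ODEs is then routine. For $n=1$ the integrating factor $e^{-\mu y}$ gives (\ref{psiespln=12}) after an elementary integration. For $n\geq 2$ the integrating factor is $y^{n-1}e^{-\mu y}$, and iterated integration by parts on $\int (n-\lambda y) y^{n-1}e^{-\mu y}\,dy$ produces the finite polynomial sum appearing in (\ref{psiesplicita}). The triviality statements follow by inspection: when $\mu = 0$ the $L_X\omega$ term vanishes and the soliton equation reduces to the \K-Einstein equation; when $\mu\neq 0$, comparing the explicit formula for $\psi(y)$ with the \K-Einstein shape from Proposition \ref{lemmasimple} pins down the precise values of $\nu$ (and, for $n\geq 2$, of $\mu$) making $g$ into a \K-Einstein metric, which turns out to be exactly the flat case.

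The main obstacle, in my view, is the conceptual step of identifying the solitonic vector field as (essentially) the Euler field and recognizing the role of radial pluriharmonic functions in the $n=1$ case, which is exactly what generates the additional parameter $k$. Once this is in place, everything else reduces to a systematic, if slightly involved, linear ODE computation.
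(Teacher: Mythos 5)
Your overall strategy is sound and is in fact essentially the strategy of the sources the paper itself relies on: the paper offers no proof of Proposition \ref{mainprop} beyond the citation to \cite[Proposition 2.2]{LSZkrs} and \cite{FIK}, and those references proceed exactly as you do, reducing the soliton equation to a first-order linear ODE for $\psi(y)$ and integrating it. Your computational steps check out. With $X=\mu E$ one gets $X(f)=\mu\, r f'(r)=\mu y(r)$; the soliton equation then says that $L(r)-\lambda f(r)-\mu y(r)$ is a radial pluriharmonic function; the identity $rL'(r)=n-(n-1)\psi/y-\dot\psi$ follows from \eqref{detmetric}; and applying $r\,d/dr$ yields \eqref{finalen=1} and \eqref{finale9} up to the sign convention on $\mu$. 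Your observation that the extra constant $k$ in dimension one comes from $\log r$ being an additional radial pluriharmonic function (for $n\geq 2$ the equations $h'\delta_{ij}+h''\bar z_iz_j=0$ with $i\neq j$ force $h$ constant) is correct and is precisely why \eqref{finale9} carries one fewer parameter. The integrating factors $e^{-\mu y}$ and $y^{n-1}e^{-\mu y}$ give \eqref{psiespln=12} and \eqref{psiesplicita}, and the triviality statements do follow by comparison with Proposition \ref{lemmasimple}: for $n\geq 2$, $\nu=0$ removes the transcendental term, and the constant term $n(\lambda-\mu)/\mu^{2}$ of \eqref{psiesplicita} (not allowed in the KE form $\psi=y-A y^{1-n}-Cy^{2}$) forces $\lambda=\mu$, whence $\psi(y)=y$, the flat case.

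The one place where your argument is thinner than it should be is the step you yourself flag as the crux: that the solitonic vector field may be taken, modulo a holomorphic Killing field, to be a real multiple of the Euler field. This does not follow merely from the $U(n)$-invariance of $L_X\omega=\rho-\lambda\omega$, since a priori a non-invariant holomorphic $X$ could have an invariant Lie derivative. The actual argument requires either a direct coordinate computation, expanding the components of $X=\sum_j a_j(z)\partial_{z_j}$ and matching $L_X\omega$ against \eqref{formaricci} minus $\lambda$ times \eqref{metric}, or a holomorphy-potential argument: the vanishing of the $(0,2)$-part of $L_X\omega$ gives $\bar\partial(\iota_X\omega)=0$, one writes $\iota_X\omega=\bar\partial\theta$ (which already needs $H^{0,1}=0$ on the domain --- a point to address for annular domains $r_{\inf}>0$), and one must then show that the only \emph{radial} holomorphy potentials are affine in $y$, plus $\log r$ terms when $n=1$. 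So the reduction to $X=\mu E$ is a genuine lemma rather than a ``straightforward check''; once it is supplied, the rest of your proof is complete and coincides with the one the paper cites.
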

\begin{proof}
See  either \cite[Proposition 2.2]{LSZkrs} or  \cite{FIK} for a proof.
\end{proof}

\begin{prop}\label{mainprop2}
Let $g$ be a radial \K\ metric and set
\begin{equation}\label{sigmafond}
\sigma(y):=\frac{1}{y^{n-1}}\frac{d}{dy}\left[y^{n-1}\psi (y)\right]=\dot\psi(y)+\frac{(n-1)\psi(y)}{y}.
\end{equation}
Then its $k$-th generalized scalar curvature $\rho_k(g)$, $1\leq k\leq n$, is constant, i.e.  $\rho_k(g)=\rho_k$, if and only if
\begin{equation}\label{rho}
\sigma(y)=n-y\left(A_k+\frac{B_k}{y^{n}}\right)^{1/k},
\end{equation}
where $A_k=\rho_k\frac{k!(n-k)!}{n!}$ and $B_k$ is  constant (depending on $k$).

Moreover, $g$ is KE with Einstein constant $\lambda$ if and only if $\sigma(y) = n - \frac{\lambda}{2} y$.
\end{prop}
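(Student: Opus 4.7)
The plan is to compute the characteristic series $\det(g_{i\bar j}+s\,\Ric_{i\bar j})/\det(g_{i\bar j})$ directly in the $(y,\sigma)$-variables and read off a simple ODE for $\sigma$ that encodes the constancy of $\rho_k$. The initial observation, visible from (\ref{metric}) and (\ref{formaricci}), is that $g_{i\bar j}$ and $\rho_{i\bar j}$ both have the form ``scalar$\cdot\Id$ plus a rank-one Hermitian term in the $\bar z$-direction'', so they are simultaneously unitarily diagonalized. The endomorphism $g^{-1}\Ric$ therefore has only two distinct eigenvalues, $\mu_1=L'/f'$ with multiplicity $n-1$ and $\mu_2=(rL')'/(rf')'$ with multiplicity one, whence
$$1+\sum_{k=1}^n\rho_k(g)\,s^k=\det\bigl(I+s\,g^{-1}\Ric\bigr)=(1+s\mu_1)^{n-1}(1+s\mu_2).$$

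Next I would translate $\mu_1$ and $\mu_2$ into the $(y,\psi,\sigma)$-language. Differentiating $L=-\log\det g=n\log r-(n-1)\log y-\log\psi$ via (\ref{detmetric}) and using the identities $ry'/y=\psi/y$ and $r\psi'/\psi=\dot\psi$, one finds $rL'=n-\sigma$, hence $\mu_1=(n-\sigma)/y=:\tau$. A parallel calculation gives $\mu_2=-\dot\sigma$, and the tautology $\sigma=n-y\tau$ implies $-\dot\sigma=\tau+y\dot\tau$. Plugging these into the binomial expansion of $(1+s\mu_1)^{n-1}(1+s\mu_2)$ and using $\binom{n-1}{k}+\binom{n-1}{k-1}=\binom{n}{k}$ together with $\binom{n-1}{k-1}=(k/n)\binom{n}{k}$, the coefficient of $s^k$ collapses to the compact form
$$\rho_k(g)=\frac{\binom{n}{k}}{n\,y^{n-1}}\,\frac{d}{dy}\bigl(y^n\tau^k\bigr).$$

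With this identity in hand, the condition $\rho_k(g)\equiv\rho_k$ becomes the trivial first-order ODE $\frac{d}{dy}(y^n\tau^k)=n\rho_k y^{n-1}/\binom{n}{k}$, which integrates to $y^n\tau^k=A_k y^n+B_k$ with $A_k=\rho_k\,k!(n-k)!/n!$ and $B_k$ an integration constant; substituting back $\tau=(n-\sigma)/y$ yields exactly (\ref{rho}), and the converse direction is read off the same identity. For the K\"ahler-Einstein claim, the condition $\rho=(\lambda/2)\omega$ (in the paper's normalization, as is consistent with Proposition \ref{lemmasimple}) is equivalent to $\mu_1=\tau=\lambda/2$ constant, hence $\sigma(y)=n-\lambda y/2$; one checks that this also forces $\mu_2=-\dot\sigma=\lambda/2=\mu_1$, so $\Ric=(\lambda/2)g$, and the converse is immediate. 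I expect the main obstacle to be the algebraic repackaging of $\rho_k$ as $\binom{n}{k}(n y^{n-1})^{-1}\frac{d}{dy}(y^n\tau^k)$; once that identity is in place, the rest is routine one-variable calculus.
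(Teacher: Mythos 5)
Your argument is correct and rests on the same decomposition as the paper's proof: both exploit the ``scalar plus rank-one'' form of $g_{i\bar j}$ and $\Ric_{i\bar j}$ to factor $\det(I+s\,g^{-1}\Ric)=(1-s\dot\sigma)\bigl(1+s\,\tfrac{n-\sigma}{y}\bigr)^{n-1}$ and then integrate a first-order ODE for $\sigma$. The one genuine difference is your key identity $\rho_k=\binom{n}{k}(n\,y^{n-1})^{-1}\tfrac{d}{dy}\bigl(y^n\tau^k\bigr)$ with $\tau=(n-\sigma)/y$, which I have verified: expanding gives $\binom{n-1}{k}\tau^k-\binom{n-1}{k-1}\tau^{k-1}\dot\sigma$, exactly the coefficient of $s^k$ in the paper's expansion \eqref{genscalrad}. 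Writing that coefficient as an exact derivative makes the integration immediate and uniform in $1\le k\le n$, whereas the paper treats $k=n$ separately and, for $k<n$, splits into the subcases $R_ky^k-n(n-\sigma)^k=0$ and $\neq 0$ before integrating a logarithmic derivative; your route buys a shorter, case-free integration at the cost of one extra algebraic identity. Your handling of the K\"ahler--Einstein assertion is also sound and slightly more self-contained than the paper's: constancy of the eigenvalue $\mu_1=\tau=\lambda/2$ forces $\mu_2=-\dot\sigma=\lambda/2$ as well, so one does not need to invoke Proposition \ref{lemmasimple}.
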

\begin{proof}
Let $g$ be radial with \K\ potential $f(r)$, where $r = |z_1|^2+\mathellipsis+|z_n|^2$. By (\ref{y(r)}) and (\ref{psi(y)}) we immediately get $f'(r) = \frac{y(r)}{r}$ and $f''(r) = \frac{\psi(y(r)) - y(r)}{r^2}$, which combined with  (\ref{metric}) yields 
$$g_{i\bar j}(r)=\frac{\psi(y(r))-y(r)}{r^{2}}\bar z_i z_j+\frac{y(r)}{r}\delta_{ij},$$
Also by (\ref{detmetric}) we have
$$L(r) = - \log (\det g)(r) = -(n-1) \log y(r) - \log \psi(y(r)) + n \log r$$
and then, by using $y'(r) = \frac{\psi(y(r))}{r}$ and (\ref{sigmafond})
$$L'(r) = -\frac{n-1}{y} \frac{\psi}{r} - \frac{\dot\psi(y)}{r} + \frac{n}{r} = \frac{n - \sigma(y(r))}{r}$$
By (\ref{formaricci}),  then finally one gets (cfr. also \cite{LSZ}) 
$$\Ric_{i\bar j}(r)=\frac{-\dot\sigma(y(r))\psi(y(r))+\sigma(y(r))-n}{r^{2}}\bar z_i z_j+\frac{n-\sigma(y(r))}{r}\delta_{ij}.$$
Then \eqref{genscal} reads as
\begin{multline}\label{genscalrad}
1+\sum_{k=1}^n\rho_ks^k=\frac{(\psi(y)-s\dot\sigma (y)\psi(y))(y+sn-s\sigma(y))^{n-1}}{\psi(y) y^{n-1}}=\left(1-s\dot\sigma(y)\right)\left(1+s\frac{n-\sigma(y)}{y}\right)^{n-1}=\\
=1+\sum_{k=1}^{n-1}\left(\frac{n-\sigma(y)}{y}\right)^{k-1}\left[\binom{n-1}{k}\frac{n-\sigma(y)}{y}-\binom{n-1}{k-1}\dot\sigma (y)\right]s^k-\dot\sigma (y)\left(\frac{n-\sigma(y)}{y}\right)^{n-1} s^n.
\end{multline}
Then $n$-th generalized scalar curvature $\rho_n$ is constant  if and only if  
$$-\dot\sigma (y)\left({n-\sigma(y)}\right)^{n-1}=\rho_n\ y^{n-1}$$ 
which integrates to 
\begin{equation*}
\sigma(y)=n- y\left(\rho_n\ +\frac{B_n}{y^n}\right)^{1/n},\end{equation*}\\
i.e. \eqref{rho} for $k=n$.

On the other hand the $k$-th generalized scalar curvature  $\rho_k$,  $1\leq k\leq n-1$,    is  constant  if and only if
\begin{equation}\label{Rk}
R_ky^{k}-(n-k)(n-\sigma(y))^{k}+ky\dot\sigma(y)(n-\sigma(y))^{k-1}=0,
\end{equation}
where $R_k=\rho_k\frac{k!(n-k)!}{(n-1)!}$, 

If $R_ky^{k}-n(n-\sigma(y))^{k}=0$
then 
\begin{equation*}
\sigma(y)=n-\left(\frac{R_k}{n}\right)^{1/k} y.\end{equation*}
i.e. \eqref{rho} with $B_k=0$.

If  $R_ky^{k}-n(n-\sigma(y))^{k}\neq 0$
then \eqref{Rk} gives
$$\frac{n-k}{y}+\frac{R_k k y^{k-1}+kn(n-\sigma(y))^{k-1}\dot\sigma(y)}{R_ky^{k}-n(n-\sigma(y))^{k}}=0$$
which integrates to 
\begin{equation*}
\sigma(y)=n-\left(\frac{R_k}{n}y^{k}+B_ky^{k-n}\right)^{1/k}.
\end{equation*}

For the last assertion of the proposition we assume $n\geq 2$ (the case $n=1$ is obtained similarly). We know by  Proposition \ref{lemmasimple} that the metric is KE with Einstein constant $\lambda$ if and only if $\psi(y) = y - \frac{\lambda}{2(n+1)} y^2 - \frac{A}{y^{n-1}}$. It is immediate to see that this is equivalent to $\frac{1}{y^{n-1}}\frac{d}{dy}\left[y^{n-1}\psi (y)\right] = n - \frac{\lambda}{2}y$, which by (\ref{sigmafond}) proves the assertion.
\end{proof}

\begin{rmk}\label{ke}\rm
Equation \eqref{rho} combined with \eqref{sigmafond}, together with a choice of initial values $y_0 > 0$ and $\psi(y_0) > 0$, yield a Cauchy problem for $\psi(y)$ whose solution is a $k$-generalized cscK which is not cscK.
For an explicit example, take $k=n$ and $A_n =0$ in \eqref{rho}: then $\sigma(y) = c:= n - (B_n)^{1/n}$ which by \eqref{sigmafond} yields $\psi(y) = \frac{c}{n} y + \frac{d}{y^{n-1}}$.
Notice that if either $n\neq 1$  or $c \neq n$, i.e. $B_n \neq 0$, this is not an extremal metric. In particular, for $d=0$, by $\psi(y) = \frac{dy}{dt}$ and by recalling that $r = e^t$ and $y(r) = rf'(r)$, one gets the potential $f(r) =\beta r^{c/n}$, for some $\beta \in \R$.
\end{rmk}

We are now in the position to prove Theorem \ref{mainteor}.

\begin{proof}[Proof  of Theorem \ref{mainteor}]
To show (i)
let us assume that a radial metric is both extremal and KRS.
Let us distinguish the cases $n=1$ and $n \geq 2$.

If $n=1$ the extremal condition \eqref{ypsi} and its derivative  read as
\begin{equation}\label{extn=1}
\psi (y)=(1-B) y-A- C y^2 -D y^3.
\end{equation}
\begin{equation}\label{derextn=1}
\dot\psi(y)=(1-B)-2 C y -3D y^2.
\end{equation}
By inserting \eqref{extn=1} into the soliton equation  \eqref{finalen=1} (for $n=1$)
we get 
 $$\dot\psi(y)=\left[\mu(1-B)-\lambda\right]y-\mu Cy^2-\mu Dy^3 -\mu A+k+1, $$
 which compared with \eqref{derextn=1} forces the coefficient of $y^3$ to vanish, i.e. $\mu D=0$.
If  $D= 0$ or $\mu=0$ the metric is KE respectively by Proposition \ref{lemmasimple} and Proposition \ref{mainprop}.
Let us now assume $n\geq 2$. By inserting equation for extremal metrics  (\ref{ypsi}) into the soliton equation (\ref{finale9}) we obtain
$$
\ \ \ \ \ \ \ \ \ \ \ \ \ \ \ \ \ \ \dot\psi(y)= 1 +\left [C(n-1) +\mu - \lambda\right]y + \left[D(n-1) - C \mu\right] y^2 - D \mu y^3  +$$
$$+\frac{A(n-1)}{y^{n}} + \frac{B(n-1) - \mu A}{y^{n-1}} -\frac{B\mu}{y^{n-2}} 
$$
On the other hand, derivating (\ref{ypsi}) we get

$$\dot\psi(y)= 1 - \frac{A(1-n)}{y^{n}} - \frac{B(2-n)}{y^{n-1}} - 2Cy - 3D y^2$$

Comparing these two last expressions and  observing that in the first one there are  the terms in $y^3$ and $\frac{1}{y^{n-2}}$ which are not in the second one, one finds either $\mu=0$ and  then  the soliton is trivial by Proposition \ref{mainprop}, or $B=D=0$, which by Proposition \ref{lemmasimple},  again implies  that the  metric is KE. Hence (i) is proved.

In order to prove (ii), assume that the generalized curvatures $\rho_k(g)$ and $\rho_h(g)$ are constant for some $h , k \geq 1$, $h \neq k$. By (\ref{rho}) in Proposition \ref{mainprop2}, we must have
$$\left(A_k+\frac{B_k}{y^{n}}\right)^{1/k} = \left(A_h+\frac{B_h}{y^{n}}\right)^{1/h}$$
which clearly implies that $B_k=B_h=0$ and $(A_k)^{1/k} = (A_h)^{1/h} = A$.

Then, $\sigma(y) = n - Ay$ and the metric is KE by the last assertion of Proposition \ref{mainprop2}.

We now prove (iii).
If a radial  \K\ metric $g$  is extremal then by combining  \eqref{ypsi} and \eqref{sigmafond}
one gets:
\begin{equation}\label{sigmaextr}
\sigma(y)= n-\frac{B}{ y^{n-1}}-C (n+1) y -D (n+2) y^2.   
\end{equation}
Assume that the $k$-th generalized scalar curvature $\rho_k(g)$ (with $k>1$) is constant: by Proposition \ref{mainprop2} and by comparing (\ref{rho})  with (\ref{sigmaextr}) we see that $\left(A_k+\frac{B_k}{y^{n}}\right)^{1/k}$ must be a rational function. This is possible only if either $B_k=0$ (and hence the metric is KE by Proposition \ref{mainprop2}) or $A_k=0$ and $\frac{n}{k} \in \Z$. In the latter 
$\sigma(y) = n - \frac{(B_k)^{1/k}}{y^{\frac{n}{k} -1}}$ which compared with (\ref{sigmaextr}) and recalling that $k>1$ yields again $B_k=0$.

Finally we prove (iv). 
By the equations \eqref{psiespln=12} and \eqref{psiesplicita} of a radial non trivial KRS one easily gets
that  \eqref{sigmafond} reads as 
\begin{equation}\label{sigma}
\sigma(y)=\frac{\mu\nu e^{\mu y}}{y^{n-1}} + n \frac{\lambda}{\mu} +\frac{\lambda-\mu}{\mu^{1+n}}\sum_{j=1}^{n-1}\frac{n!}{(j-1)!} \mu^j y^{n-j}.\end{equation}
By comparing the previous equation with \eqref{rho}, we easily get that if a  radial non trivial  KRS has  constant $k$-th generalized scalar curvature (with $1\leq k\leq n$), then $\nu=0$ and $\lambda = \mu$, which by  the last assertion of Proposition \ref{mainprop} means that $g$  is KE
(actually Ricci flat), yielding the desired contradiction and proving (iv). 
\end{proof}

\section{Some final remarks}
The assertion (i) in Theorem \ref{mainteor} should be compared with  (b2) of Theorem A in the introduction. 
Hence it is worth to exhibit  radial extremal metrics and non trivial radial KRS  with sign-changing holomorphic sectional curvature. 
This is done in the following two examples.
We first recall that in \cite{LSZ2} we have shown that, given a radial metric, in the point $p = (z_1, 0, \dots, 0)$ the only non vanishing components of the Riemann tensor $R_{i \bar j k \bar l}$ are

$$R_{1 \bar 1 1 \bar 1} = \frac{\ddot\psi(y) \psi^2(y)}{r^2}$$
$$R_{1 \bar 1 i \bar i} = \frac{\dot\psi(y)y - \psi(y)}{y r^2}$$
$$R_{i \bar i i \bar i} = 2 R_{i \bar i j \bar j} = 2 \frac{\psi(y)- y }{r^2}$$

Then, in $p$, the holomorphic sectional curvature along $Z = \sum_k \xi_k \frac{\partial}{\partial z_k}$ is

\vskip0.3cm

$$R(Z, \bar Z, Z, \bar Z) = \frac{\ddot\psi(y) \psi^2(y)}{r^2} |\xi_1|^4 + \frac{\dot\psi(y) y - \psi(y)}{y r^2} |\xi_1|^2 \sum |\xi_i|^2 + $$

$$\ \ \ \ \ \ \  \ \ \ \ \ +\frac{\psi(y) - y }{r^2} \sum |\xi_i|^2 |\xi_j|^2 + 2 \frac{\psi(y) - y }{r^2} \sum |\xi_i|^4 .$$

If we assume that $\xi_2 = \cdots = \xi_n = 0$ this formula (always in $p$) reduces to 

\begin{equation}\label{Rsemplif}
R(Z, \bar Z, Z, \bar Z) = \frac{\ddot\psi(y) \psi^2(y)}{r^2} |\xi_1|^4
\end{equation}
Thus to  find  radial extremal metrics or radial KRS
with sign-changing holomorphic sectional curvature, it will be enough to find  metrics for which $\ddot\psi$ changes sign in its domain of definition.

\begin{example} \rm 
Take the radial extremal metric in  dimension $n \geq 2$ with  $A=B=0$, $C=1$, $D=-1$ in (\ref{ypsi}), i.e.
$$\psi(y) = y -y^2 + y^3.$$ 

Since $\ddot\psi(y) = -2 + 6y$ we have that $\ddot\psi(y)$ changes sign in a neighbourhood of $y = \frac{1}{3}$; moreover, being $\psi(\frac{1}{3}) > 0$ the local solution $y(t)$ of the Cauchy problem $\frac{dy}{dt} = \psi(y(t)), \ y(t_0) = \frac{1}{3}$, for any $t_0 \in \R$, satisfies the conditions $y > 0$ and $\psi(y) >0$ to represent a metric and then $\psi$ defines an extremal metric which, by (\ref{Rsemplif}), has sign-changing holomorphic sectional curvature.
\end{example}

\begin{example} \rm 
In order to find a nontrivial radial KRS  with sign-changing holomorphic sectional curvature, take for example
$$n=3, \ \ \nu=0, \ \ \mu < 0, \ \  \lambda < -\frac{5}{4} \mu,\  \lambda \neq \mu$$
in \eqref{psiesplicita}, i.e.

\begin{equation}\label{psiesplicitaANTICIPATAsegno}
\psi(y) = \frac{\lambda}{\mu} y + \frac{\lambda - \mu}{\mu^4} \left( \frac{6}{y^2} + \frac{6 \mu}{y} + 3 \mu^2 \right)
\end{equation}

One gets

\begin{equation}\label{psiesplicitaANTICIPATAsegno2}
\ddot\psi(y) = \frac{12(\lambda - \mu)}{\mu^4 y^4} (3 + \mu y)
\end{equation}

and then $\ddot\psi(y)$ changes sign in a neighbourhood of $y = - \frac{3}{\mu}$, which is positive by the assumptions. Moreover, one finds

$$\psi \left( - \frac{3}{\mu} \right) = \frac{-4 \lambda - 5 \mu}{3\mu^2}$$

which is  positive by the assumptions. 
Then, we conclude as in the previous example that $\psi$ yields a non-trivial Ricci soliton which, by (\ref{Rsemplif}), has sign-changing holomorphic sectional curvature (the non-triviality is guaranteed by $\lambda \neq \mu$).
\end{example}

\end{document}